\documentclass[a4paper, 11pt]{amsart}
\usepackage{longtable, stackrel, multicol, fancybox, tcolorbox}
\usepackage{bm,amsfonts,amsmath,amssymb,mathrsfs,bbm,amsthm} 
\usepackage{graphicx, color,hyperref,eurosym, eucal,palatino}

\definecolor{darkblue}{rgb}{0.2,0.2,0.6}
\definecolor{darkblue2}{rgb}{0.2,0.2,0.9}
\definecolor{superdarkblue}{rgb}{0.2,0.2,0.3}
\hypersetup{colorlinks=true, linkcolor=darkblue,citecolor=darkblue, 
	urlcolor = superdarkblue}
\definecolor{citegreen}{rgb}{0.2,0.2,0.6}
\usepackage{marginnote}
\usepackage{geometry}

\geometry{
	papersize = {8.5in,11in},
	lmargin=1.55 in,
	rmargin = 1.55 in,
	top=1.25 in,
}

\newcommand\name[1]{{\small\sc#1}}

\newcommand\Op{\sfH_{\omg,\Sg}}

\newcommand\frh{\mathfrak{h}}

\newcommand\nb{\nabla}

\newcommand{\dist}{\mathop{\mathrm{dist}}\nolimits}

\theoremstyle{definition}

%
%
\usepackage[normalem]{ulem}

\makeatletter
\newcommand{\vast}{\bBigg@{3}}
\newcommand{\Vast}{\bBigg@{5}}

\usepackage{scrextend}
\changefontsizes{11pt}

\newcommand{\eg}{{\it e.g.}\,}
\newcommand{\ie}{{\it i.e.}\,}
\newcommand{\cf}{{\it cf.}\,}

\renewcommand\and{\qquad\text{and}\qquad}

\newcommand\sm{\setminus}

\newcommand\frm{\frh_{\omg,\Sg}}

\newcommand\Ev{\lm_1^\omg(\Sg)}

\newcommand\dl{\delta}
\newcommand\Dl{\Delta}

\newcommand{\comm}[1]{}

\newcommand\lm{\lambda}

\newcommand\p{\partial}

\newcommand\omg{\omega}
\newcommand\Omg{\Omega}
\renewcommand\tt{\theta}

\renewcommand{\iff}{\textit{if, and only if,}\,}

\newcommand\arr{\rightarrow}

\newcommand\Sg{\Sigma}


\newcommand\sess{\sigma_{\rm ess}}

\newcommand\dd{{\mathsf{d}}}

\newcounter{counter_a}
\newenvironment{myenum}{\begin{list}{{\rm(\roman{counter_a})}}%
{\usecounter{counter_a}
\setlength{\itemsep}{1.ex}\setlength{\topsep}{0.8ex}
\setlength{\leftmargin}{5ex}\setlength{\labelwidth}{5ex}}}{\end{list}}

\usepackage[latin1]{inputenc}
\usepackage[T1]{fontenc}

\numberwithin{figure}{section}
\numberwithin{equation}{section}
\theoremstyle{plain}
\newtheorem*{thm*}{Theorem}
\newtheorem{thm}{Theorem}

\newtheorem{prop}[thm]{Proposition}

\theoremstyle{remark}

\theoremstyle{plain}


%

\newcommand\ov{\overline}

\def\ov{\overline}


      \def\dC{{\mathbb C}}

      \def\dR{{\mathbb R}}

   \def\sfH{{\mathsf H}}

\def\sfV{{\mathsf V}}

      \def\cC{{\mathcal C}}
\def\cD{{\mathcal D}}      
      
      \def\cL{{\mathcal L}}

\newcommand{\dom}{\mathrm{dom}\,}

\makeatletter
\def\section{\@startsection{section}{1}\z@{.9\linespacing\@plus\linespacing}%
	{.7\linespacing} {\fontsize{14}{15}\selectfont\bfseries\centering}}

\def\subsection{\@startsection{subsection}{1}\z@{.9\linespacing\@plus\linespacing}%
	{.3\linespacing} {\fontsize{11}{12}\selectfont\bfseries}}	
	
\def\paragraph{\@startsection{paragraph}{4}%
	\z@{0.3em}{-.5em}%
	{$\bullet$ \ \normalfont\itshape}}

\@addtoreset{equation}{section}
\makeatother

\sloppy


\theoremstyle{definition}

\title[]{Spectral isoperimetric inequality for the \boldmath{$\dl'$}-interaction on a contour}
 
\author{Vladimir Lotoreichik}
\address{Department of Theoretical Physics, Nuclear Physics Institute, 	Czech Academy of Sciences, 25068 \v Re\v z, Czech Republic}
\email{lotoreichik@ujf.cas.cz}

\dedicatory{{\rm A contribution to the proceedings of the 
$3^{\rm rd}$ workshop}\\ Mathematical Challenges of Zero-Range Physics:
rigorous results and open problems}

\begin{document}

\begin{abstract}
	We consider the problem of geometric optimization for the lowest	eigenvalue of the two-dimensional Schr\"odinger operator with an
	attractive $\dl'$-interaction of a fixed strength, the support of
	which is a $C^2$-smooth contour. Under the constraint of a fixed length of the contour, we prove that the lowest eigenvalue is
	maximized by the circle. The proof relies on the
	min-max principle and the method of parallel coordinates.
\end{abstract}

\keywords{Schr\"odinger operators, $\dl'$-interaction on a contour, lowest eigenvalue, spectral shape optimization, min-max principle, parallel coordinates}

\subjclass[2010]{35P15 (primary); 58J50,
	81Q37 (secondary)}

\maketitle

\section{Introduction}

\subsection{The state of the art and motivation}
The question of optimizing shapes in spectral theory is a rich subject with many applications and deep mathematical insights;
see the monographs~\cite{Henrot, Henrot2} and the references therein.
In this note, we consider the problem of shape optimization for the lowest eigenvalue of the two-dimensional Schr\"o\-dinger operator with a $\dl'$-interaction supported on a closed contour in $\dR^2$. This problem can be regarded as a counterpart of the analysis performed in~\cite{EHL06} 
for $\dl$-interactions. 

In the recent years, the investigation of
Schr\"odinger operators with $\dl'$-interactions supported on hypersurfaces became a topic of permanent interest -- see,
\eg, \cite{BGLL15, BEL14, BLL13, EJ13, EKh15, EKh18, JL16, MPS16}. The Hamiltonians with $\dl'$-interactions and some of their 
generalizations appear, for example, in the study of photonic crystals~\cite{FK96a,FK96b} and in the analysis of the Dirac operator
with scalar shell interactions~\cite{HOP18}.
The boundary condition corresponding to the $\dl'$-interaction arises in the
asymptotic analysis of a class of structured thin Neumann obstacles~\cite{DFZ18, Kh70}.
Finally, the same boundary condition pops up in the computational spectral theory; see~\cite{Da99} and 
the references therein.

The proofs in~\cite{EHL06} and in related optimization problems for 
singular interactions on hypersurfaces~\cite{AMV16,
	BFKLR17, E05, EL17, EL18, L18}
rely on the Birman-Schwinger principle, which can also be viewed as a boundary integral reformulation of the
spectral problem.  In this note, we do not pass to
any boundary integral reformulation. Instead, we combine
the min-max principle and the method of parallel coordinates
on the level of the quadratic form for the Hamiltonian,
in the spirit of the recent analysis for the Robin Laplacian~~\cite{AFK17, FK15, KL, KL17a,KL17b}. Our main motivation is to show
that this approach initially developed for the Robin Laplacian can
also be adapted for a much wider class of
optimization problems involving surface interactions. The convenience of this alternative method is particularly visible for $\dl'$-interactions, because the operator arising in the corresponding Birman-Schwinger principle 
(\cf~\cite[Rem. 3.9]{BLL13}) is more involved than for $\dl$-interactions.

\subsection{Schr\"odinger operator with a $\dl'$-interaction on a contour}
In order to define the Hamiltonian, we need to introduce some notation. In
what follows we consider a bounded, simply connected, $C^2$-smooth domain 
$\Omg_+\subset \dR^2$, whose boundary will be denoted by $\Sg = \p\Omg_+$. The complement $\Omg_- := \dR^2\sm\ov{\Omg_+}$ of $\Omg_+$ is an unbounded exterior domain with the same boundary $\Sg$. For a function $u \in L^2(\dR^2)$ we set $u_\pm := u|_{\Omg_\pm}$. We also introduce the first 
order $L^2$-based Sobolev space on $\dR^2\sm\Sg$ as follows 
\[
	H^1(\dR^2\sm\Sg) := H^1(\Omg_+)\oplus H^1(\Omg_-),
\]
where $H^1(\Omg_\pm)$ are the conventional
first-order $L^2$-based Sobolev spaces on $\Omg_\pm$.

Given a real number~$\omg > 0$, we consider the spectral problem for the self-adjoint operator $\Op$ corresponding via the first representation theorem to the closed, densely defined, symmetric, and semi-bounded quadratic form in $L^2(\dR^2)$,
\begin{equation}\label{eq:form}
	\frm[u]
	=
	\big\|\nb_{\dR^2\sm\Sg} u\big\|^2_{L^2(\dR^2;\dC^2)}  
	- 	
	\omg\big\|[u]_\Sg\big\|^2_{L^2(\Sg)},
	\qquad
	\dom \frm  = H^1(\dR^2\sm\Sg),
\end{equation}
where $\nb_{\dR^2\sm\Sg} u := \nb u_+\oplus \nb u_-$
and $[u]_{\Sg} := u_+|_\Sg - u_-|_\Sg$ denotes the jump of the trace of $u$ on $\Sg$; \cf~\cite[Sec.~3.2]{BEL14}. The operator $\Op$ is usually called 
\emph{the Schr\"odinger operator with the $\dl'$-interaction} of strength
$\omg$ supported on $\Sg$. It acts as the minus Laplacian on functions satisfying the transmission boundary condition  of $\dl'$-type on the interface $\Sg$
\begin{equation}\label{eq:bc0}
	\p_{\nu_+} u_+|_\Sg = -\p_{\nu_-} u_-|_\Sg	= \omg[u]_\Sg,
\end{equation}
where $\p_{\nu_\pm}u_\pm|_{\Sg}$ denotes the trace
onto $\Sg$ of the normal derivative of $u_\pm$ with
the normal vector $\nu_\pm$ at the boundary of $\Omg_\pm$ pointing outwards; see Section~\ref{sec:prelim} for more details.

Recall that the essential spectrum of $\Op$ coincides
with the set $[0,\infty)$ and that its negative discrete
spectrum is known to be non-empty;
see Proposition~\ref{prop:ess_spec} below. By
$\Ev$ we denote the spectral threshold of
$\Op$, which is
an isolated negative eigenvalue.

\subsection{The main result}
The aim of this note is to demonstrate that $\Ev$
is maximized by the circle $\cC\subset\dR^2$, among all
contours of a fixed length.
A precise formulation of this statement is the content of the following theorem.
\begin{thm}\label{Thm}
	For any~$\omg > 0$, one has
	\begin{flalign*}\label{result}
		\max_{|\Sg| = L} \Ev  =	\lm_1^\omg(\cC),
	\end{flalign*}
	where $\cC\subset\dR^2$ is a circle of a given length $L > 0$ and the maximum is taken over all $C^2$-contours of length $L$.
\end{thm}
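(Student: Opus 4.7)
The plan is to combine the min-max principle with the method of parallel coordinates, transplanting the radial ground state on $\cC$ to a general contour $\Sg$ through the distance function. By the variational characterization of $\Ev$, it suffices to exhibit, for every admissible $\Sg$ of length $L$, a test function $u \in H^1(\dR^2\sm\Sg)$ satisfying $\frm[u] \leq \lm_1^\omg(\cC)\,\|u\|^2$.

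First I would set $R := L/(2\pi)$ and let $\psi$ be the real, positive, radially symmetric ground state on the circular configuration, writing $\psi_\pm(r)$ for its restriction to each side of $\cC$. Introducing the one-dimensional profiles $v_+(t) := \psi_+(R-t)$ for $t \in [0,R]$ (extended by $\psi_+(0)$ for $t > R$) and $v_-(t) := \psi_-(R+t)$ for $t \geq 0$, and writing $t_\pm(x) := \mathrm{dist}(x,\Sg)$ on $\Omg_\pm$, I define $u(x) := v_\pm(t_\pm(x))$ on $\Omg_\pm$. The trace jump $[u]_\Sg = v_+(0) - v_-(0)$ is constant along $\Sg$ and equals the jump of $\psi$ on $\cC$, so the interface contribution to $\frm[u]$ matches that for $\psi$, since $|\Sg| = |\cC| = L$. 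Using $|\nb t_\pm| = 1$ a.e.\ together with the coarea formula, the remaining terms in $\frm[u]$ and $\|u\|^2$ reduce to one-dimensional integrals weighted by the parallel perimeters $P_\pm(t) := \mathcal{H}^1(\{x \in \Omg_\pm : t_\pm(x) = t\})$.

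Setting $\lm := \lm_1^\omg(\cC) < 0$, the same coarea reduction applied to $\psi$ on $\cC$ yields the identity
\begin{equation*}
0 = \int_0^R (|v_+'|^2 - \lm v_+^2)\,2\pi(R-t)\,dt + \int_0^\infty (|v_-'|^2 - \lm v_-^2)\,2\pi(R+t)\,dt - \omg L\,\bigl[v_+(0) - v_-(0)\bigr]^2.
\end{equation*}
Subtracting this from $\frm[u] - \lm\|u\|^2$ cancels the interface term and produces
\begin{align*}
\frm[u] - \lm\|u\|^2 &= \int_0^R (|v_+'|^2 - \lm v_+^2)\bigl[P_+(t) - 2\pi(R-t)\bigr]\,dt - \lm\int_R^\infty v_+^2\,P_+(t)\,dt\\
&\quad + \int_0^\infty (|v_-'|^2 - \lm v_-^2)\bigl[P_-(t) - 2\pi(R+t)\bigr]\,dt,
\end{align*}
in which the brackets $|v_\pm'|^2 - \lm v_\pm^2$ are non-negative since $\lm < 0$.

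The rest is geometric. Two Bol--Sz.-Nagy-type inequalities for parallel perimeters supply the desired signs, namely $P_+(t) \leq L - 2\pi t$ on $[0,T_+]$ (with $T_+$ the inradius of $\Omg_+$) and $P_-(t) \leq L + 2\pi t$ for all $t \geq 0$. Both follow by parameterizing the parallel curves by arc length on $\Sg$ and using $\int_\Sg \kappa\,ds = 2\pi$, any self-intersections shortening the true parallel set. A 1-Lipschitz projection of $\Sg$ onto the maximal inscribed disk of $\Omg_+$ additionally gives $T_+ \leq L/(2\pi) = R$, so that $P_+ \equiv 0$ on $(R,\infty)$ and the middle integral above vanishes. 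Hence $\frm[u] \leq \lm\,\|u\|^2$, and the min-max principle yields $\Ev \leq \lm_1^\omg(\cC)$ as required.

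The delicate points I anticipate are: (i) verifying $u \in H^1(\dR^2\sm\Sg)$ and the validity of the coarea reduction despite $t_\pm$ being only Lipschitz and non-smooth along the cut locus of $\Sg$ in $\Omg_+$, which should follow from the fact that this cut locus has two-dimensional Lebesgue measure zero for $C^2$-smooth $\Sg$; and (ii) establishing the exterior perimeter bound $P_-(t) \leq L + 2\pi t$ uniformly in $t$, the global structure of exterior parallel sets of a non-convex $\Omg_+$ being somewhat more subtle than the interior analogue.
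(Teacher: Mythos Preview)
Your proposal is correct and takes essentially the same approach as the paper: transplant the radial ground state of $\sfH_{\omg,\cC}$ to $\Sg$ via the distance functions $\rho_\pm$ and invoke the parallel-perimeter bounds $P_+(t)\le L-2\pi t$, $P_-(t)\le L+2\pi t$ together with the inradius estimate $T_+\le R$. The only cosmetic difference is that the paper bounds $\frm[\sfV w]\le\frh_{\omg,\cC}[w]$ and $\|\sfV w\|^2\le\|w\|^2$ separately (then uses negativity of the numerator in the min-max ratio~\eqref{eq:minmax2}), whereas you subtract the eigenvalue identity to control $\frm[u]-\lm\|u\|^2$ directly; these are equivalent, and your technical concerns (i) and (ii) are exactly what the paper delegates to Proposition~\ref{prop_savo}.
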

The proof of Theorem~\ref{Thm} relies on the min-max principle and the method of parallel coordinates. The
latter method has been proposed in~\cite{PW61} by
\name{L.\,E.~Payne} and \name{H.\,F.~Weinberger} in order
to obtain inequalities being reverse to the celebrated
Faber-Krahn inequality~\cite{F23,K24} with some geometrically-induced
corrections. Recently it has been observed that this method is very efficient in the proofs of isoperimetric inequalities for the lowest eigenvalue of the Robin Laplacian on bounded~\cite{AFK17,FK15} and 
exterior~\cite{KL17a, KL17b} domains with an `attractive' boundary condition. In the present paper we adapt this approach for the case of a bounded domain and its exterior coupled via the transmission boundary condition~\eqref{eq:bc0} of $\dl'$-type.
 
\subsection*{Organisation of the paper}

In Section~\ref{sec:prelim} we recall the known spectral properties of $\Op$ that are needed in this paper. Section~\ref{sec:circle} is devoted to the
spectral analysis of $\sfH_{\omg,\cC}$ with the interaction supported on a circle $\cC$. The method of parallel coordinates is briefly outlined in Section~\ref{sec:parallel}. Theorem~\ref{Thm} is
proven in Section~\ref{sec:proof}. The paper is concluded by Section~\ref{sec:discussion} containing a discussion of the obtained
results and their possible extensions.

\section{The spectral problem for the \boldmath{$\dl'$}-interaction supported on a closed contour}\label{sec:prelim}
%
Recall that we consider a bounded, simply connected, $C^2$-smooth domain 
$\Omg_+\subset \dR^2$ with the boundary $\Sg = \p\Omg_+$ and with the complement $\Omg_- := \dR^2\sm\ov{\Omg_+}$. Recall also that for a function 
$u \in L^2(\dR^2)$, we set $u_\pm := u|_{\Omg_\pm}$.
At the same time, the (attractive) coupling strength $\omg$~is a fixed positive number. 

We are interested in the spectral properties of the self-adjoint operator $\Op$ in $L^2(\dR^2)$ introduced via the first representation theorem~\cite[Thm. VI 2.1]{Kato} as associated with
the closed, densely defined, symmetric, and semi-bounded quadratic
form $\frm$ defined in~\eqref{eq:form};
see~\cite[Sec. 3.2]{BEL14} and also~\cite[Sec. 3.3 and Prop. 3.15]{BLL13}.

We would like to warn the reader that in the majority of the papers on $\dl'$-interactions not $\omg$ itself, but its inverse $\beta := \omg^{-1}$ is called the strength of the interaction. This tradition goes back to papers on point $\dl'$-interaction on the real line; see~\cite{AGHH} and the references therein. Preserving
this tradition for $\dl'$-interactions on hypersurfaces
can be physically motivated, but 
leads to a technical mathematical inconvenience, which
we would like to avoid.

Let us add a few words about the explicit characterisation of the operator $\Op$. The domain of $\Op$ consists
of functions $u \in H^1(\dR^2\sm\Sg)$, which satisfy $\Dl u_\pm \in L^2(\Omg_\pm)$ in the sense of distributions and the $\dl'$-type
boundary condition~\eqref{eq:bc0} on $\Sg$
in the sense of traces.
Moreover, for any $u \in \dom \Op$ we have
$\Op u = (-\Dl u_+) \oplus (-\Dl u_-) $. The reader may
consult~\cite[Sec. 3.2 and Thm. 3.3]{BEL14}, where
it is shown that the operator characterised above
is indeed the self-adjoint operator representing
the quadratic form $\frm$ in~\eqref{eq:form}.
It is worth mentioning that $C^2$-smoothness of $\Sg$
is not needed to define the operator $\Op$, but it is important for the method of parallel coordinates used in the proof of Theorem~\ref{Thm}.

The lowest spectral point of $\Op$ can be characterised by the min-max principle~\cite[Sec.~XIII.1]{RS4}  as follows
\begin{equation}\label{eq:minmax}
	\lm_1^\omg(\Sg) =
	\inf_{\stackrel[u\ne 0]{}{u \in H^1(\dR^2\sm\Sg)}}
	\frac{\frm[u]}{\|u\|^2_{L^2(\dR^2)}}.
\end{equation} 
It is not surprising that the operator $\Op$ has a
non-empty essential spectrum. In fact, one can show that $\Op$ is a compact perturbation in the sense of resolvent differences of the free Laplacian on $\dR^2$ and thus the essential spectrum coincides with the positive semi-axis.
Using the characteristic function of $\Omg_+$
as a test function for~\eqref{eq:minmax}  one gets that the negative
discrete spectrum of $\Op$ is non-empty. More specifically, we have the
following statement.
\begin{prop}\label{prop:ess_spec}
	For all $\omg > 0$, the following hold.
	\begin{myenum}
		\item The essential spectrum of $\Op$ is characterized as follows $\sess(\Op) =[0,\infty)$.
		\item The negative discrete spectrum
		of $\Op$ is non-empty.
	\end{myenum} 
\end{prop}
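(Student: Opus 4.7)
The plan is to prove the two assertions separately; both rely only on elementary considerations once the standard compactness statement for the $\dl'$ resolvent is invoked.

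For the characterization of the essential spectrum, I would establish the two inclusions. The inclusion $[0,\infty)\subseteq\sess(\Op)$ follows from a singular Weyl sequence concentrated deep in the exterior domain $\Omg_-$. Fix $\mu=k^2\ge 0$, a unit vector $\xi\in\dR^2$, a function $\varphi\in C_c^\infty(\dR^2)$ with $\|\varphi\|_{L^2}=1$, and points $x_n\in\Omg_-$ with $|x_n|\to\infty$ chosen so that $B(x_n,n)\cap\Sg=\emptyset$. The sequence
\[
	u_n(x):=n^{-1}\varphi\big((x-x_n)/n\big)\,e^{\rmi k\,\xi\cdot x}
\]
lies in $\dom \Op$ (each $u_n$ vanishes in a neighbourhood of $\Sg$), satisfies $\Op u_n=-\Dl u_n$, has $\|u_n\|_{L^2(\dR^2)}=1$, obeys $\|(\Op-\mu)u_n\|_{L^2(\dR^2)}=O(n^{-1})$ by a direct computation, and converges weakly to zero since its supports escape to infinity. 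The reverse inclusion $\sess(\Op)\subseteq[0,\infty)$ follows from the fact, indicated in the paragraph preceding the proposition, that the resolvent difference $(\Op - z)^{-1}-(-\Dl_{\dR^2}-z)^{-1}$ is compact; this is established for $\dl'$-interactions supported on $C^2$-smooth hypersurfaces in~\cite{BEL14} via boundary-triple techniques combined with the compactness of the trace map $H^1(\dR^2\sm\Sg)\to L^2(\Sg)$. Weyl's theorem on stability of essential spectrum then yields $\sess(\Op)=\sess(-\Dl_{\dR^2})=[0,\infty)$.

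Non-emptiness of the negative discrete spectrum reduces to a one-line min-max computation. The characteristic function $u_\star:=\chi_{\Omg_+}$ belongs to $H^1(\dR^2\sm\Sg)$ because each restriction $u_\star|_{\Omg_\pm}$ is constant. Its gradient vanishes almost everywhere while $[u_\star]_\Sg\equiv 1$, so
\[
	\frm[u_\star] = -\omg|\Sg|<0,
	\qquad
	\|u_\star\|_{L^2(\dR^2)}^2=|\Omg_+|.
\]
Substituting into~\eqref{eq:minmax} yields $\lm_1^\omg(\Sg)\le -\omg|\Sg|/|\Omg_+|<0$, which combined with the essential-spectrum statement places at least one eigenvalue strictly below $\sess(\Op)$. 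The only subtle step in the whole argument is the compactness of the resolvent difference; I would shortcut it by invoking~\cite[Sec.~3.2]{BEL14} rather than reproducing the boundary-triple computation here.
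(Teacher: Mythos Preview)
Your argument is correct and matches the paper's own treatment: the paper does not give a self-contained proof of this proposition but refers to~\cite[Thm.~4.2\,(ii)]{BEL14} and~\cite[Thm.~3.14\,(i)]{BLL13} for~(i) and to~\cite[Thm.~4.4]{BEL14} for~(ii), having already remarked that the characteristic function $\chi_{\Omg_+}$ serves as the test function showing negativity of $\lm_1^\omg(\Sg)$. Your Weyl-sequence construction for the inclusion $[0,\infty)\subseteq\sess(\Op)$ and your invocation of the resolvent-difference compactness from~\cite{BEL14} for the reverse inclusion are exactly the ingredients behind those cited results, so you have simply unpacked what the paper leaves to the references.
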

A proof of~(i) in the above proposition can be found in~\cite[Thm. 4.2\,(ii)]{BEL14}, see also~\cite[Thm. 3.14\,(i)]{BLL13}. A proof of~(ii) is contained in~\cite[Thm. 4.4]{BEL14}. Some further properties of the discrete spectrum of $\Op$ are investigated in or follow from~\cite{BLL13, EJ13}.
Note that by~\cite[Thm. 3.14\,(ii)]{BLL13} the negative discrete spectrum of $\Op$ is finite for $C^\infty$-smooth
$\Sg$ and it can be shown in a similar way that 
the discrete spectrum persists to be finite for $C^2$-smooth $\Sg$.

Taking that the spectral threshold of $\Op$ is a negative discrete eigenvalue into account, we can
slightly modify the characterisation of $\Ev$ given in~\eqref{eq:minmax}
as follows:
\begin{equation}\label{eq:minmax2}
	\Ev =
	\inf_{\stackrel[{\frm[u]<0}]{}{u \in H^1(\dR^2\sm\Sg)}}
	\frac{\frm[u]}{\|u\|^2_{L^2(\dR^2)}}.
\end{equation} 

\section{The spectral problem for the \boldmath{$\dl'$}-interaction supported on a circle}\label{sec:circle}
In this section we consider the lowest eigenvalue
for the operator $\sfH_{\omg,\cC}$ with the $\dl'$-interaction of strength $\omg > 0$ supported on a circle $\cC = \cC_R\subset\dR^2$ of radius $R > 0$. Our primary
interest concerns the dependence of this eigenvalue
on the radius $R$.
For the sake of convenience,
we introduce the polar coordinates $(r,\tt)$, whose pole 
coincides with the center
of $\cC$. Note also that the circle $\cC$
splits the Euclidean plane $\dR^2$ into the disk 
$\cD_+ = \{x\in\dR^2\colon |x| < R\}$ and its exterior
$\cD_- = \{x\in\dR^2\colon |x| > R\}$.
\begin{prop}\label{prop:circle}
	Let $\cC = \cC_R\subset\dR^2$ be a circle of radius $R > 0$.
	Let $\lm_1^\omg(\cC) = -k^2 < 0$ and $u_1\in H^1(\dR^2\sm\cC)$ be, respectively, 
	the lowest eigenvalue and a corresponding eigenfunction of 
	$\sfH_{\omg,\cC}$. 
	Then the following hold. 
	\begin{myenum}
		\item The value $k > 0$ is the unique positive solution of the equation 
		\begin{equation*}\label{eq:secular}
			k^2RI_1(kR)K_1(kR) = \omg.   
		\end{equation*}
		\item
		The function $(0,\infty)\ni R\mapsto\lm_1^\omg(\cC_R)$ is continuous, increasing, and 
		\begin{equation*}\label{eq:limits}
			\lim_{R\arr 0^+} \lm_1^\omg(\cC_R) = -\infty
			\and
			\lim_{R\arr \infty} \lm_1^\omg(\cC_R) = -4\omg^2.
		\end{equation*}	
		\item
		The ground-state $u_1$ is radial and can be expressed in polar coordinates $(r,\tt)$ as
		\begin{equation}\label{eq:eigenfunction}
			u_1(r,\tt) = 
			\begin{cases}
			K_1(kR) I_0(kr),&\quad r < R,\\
			-I_1(kR) K_0(kr),&\quad r > R.
			\end{cases}
		\end{equation}
	\end{myenum}	
\end{prop}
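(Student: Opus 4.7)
My plan is to exploit the rotational symmetry of the configuration. In polar coordinates $(r,\tt)$ centred at the centre of $\cC$, I would decompose $L^2(\dR^2)$ into angular Fourier modes $\mathcal{H}_m = \{v(r)e^{\rmi m\tt}/\sqrt{2\pi}\colon v\in L^2((0,\infty);r\,dr)\}$, $m\in\Z$. Both $\frm$ and $H^1(\dR^2\sm\cC)$ respect this splitting, so $\sfH_{\omg,\cC}$ is unitarily equivalent to $\bigoplus_{m\in\Z}\sfH^{(m)}_{\omg,\cC}$, where the $m$-th summand is associated with the quadratic form
\[
\frm^{(m)}[v]=\int_0^\infty\!\!\Big(|v'(r)|^2 + \tfrac{m^2}{r^2}|v(r)|^2\Big)r\,dr-\omg R\,|v(R^+)-v(R^-)|^2.
\]
Since the centrifugal term vanishes when $m=0$ and is strictly positive otherwise, the min-max principle immediately identifies the ground state with a radial one, establishing the radial claim in~(iii).

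Next I would solve the radial problem on $\mathcal{H}_0$ explicitly. Writing $\lm_1^\omg(\cC)=-k^2$, the eigenfunction satisfies the modified Bessel equation of order $0$ on each of $(0,R)$ and $(R,\infty)$; regularity at the origin forces $u_+(r)=A\,I_0(kr)$, while the $L^2$-condition at infinity forces $u_-(r)=B\,K_0(kr)$. With $\nu_\pm=\pm e_r$ on $\cC$ and the identities $I_0'=I_1$, $K_0'=-K_1$, the interface condition~\eqref{eq:bc0} splits in two: continuity of $\p_r u$ across $r=R$ gives $A\,I_1(kR)+B\,K_1(kR)=0$ and fixes the normalisation $A=K_1(kR)$, $B=-I_1(kR)$ of~\eqref{eq:eigenfunction}; the jump relation $AkI_1(kR)=\omg(AI_0(kR)-BK_0(kR))$ then reduces, via the Wronskian identity $I_0(z)K_1(z)+I_1(z)K_0(z)=1/z$, to the secular equation $k^2RI_1(kR)K_1(kR)=\omg$. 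This delivers~(i) modulo uniqueness of $k$ and completes~(iii).

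For (ii), I would set $F(z):=zI_1(z)K_1(z)$ and rewrite the secular equation as $kF(kR)=\omg$. The essential analytic input is that $F\colon(0,\infty)\to(0,1/2)$ is continuous and strictly increasing, with $F(z)\sim z/2$ as $z\arr 0^+$ and $F(z)\arr 1/2$ as $z\arr\infty$; this classical fact follows, for instance, from a Nicholson-type integral representation of $I_\nu(z)K_\nu(z)$ or from a direct manipulation of the Bessel ODE. Granted this, for each fixed $R>0$ the map $k\mapsto kF(kR)$ is continuous and strictly increasing from $0$ to $+\infty$, which yields a unique positive root $k(R)$, and implicit differentiation shows $k'(R)<0$. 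Consequently $-k(R)^2$ is continuous and strictly increasing in $R$, and the two limits in~(ii) follow from the asymptotics of $F$: as $R\arr 0^+$ one must have $kR\arr 0$ and $k\arr\infty$, so $\lm_1^\omg(\cC_R)\arr-\infty$; as $R\arr\infty$ one must have $kR\arr\infty$ and $k\arr 2\omg$, so $\lm_1^\omg(\cC_R)\arr-4\omg^2$.

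The main obstacle is really only the monotonicity of $F(z)=zI_1(z)K_1(z)$; every other step is a short computation with the Wronskian and the leading-order asymptotics of $I_0,K_0,I_1,K_1$. This monotonicity, while classical, is the one non-trivial analytic fact underpinning both the uniqueness of the root in~(i) and the strict monotonicity statement in~(ii), and it is the point at which I would expect to spend the most care in a complete write-up.
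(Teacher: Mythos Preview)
Your proposal is correct and follows essentially the same route as the paper: reduce to the radial ODE, match the Bessel pieces across $r=R$, use the Wronskian $I_0K_1+I_1K_0=1/z$ to obtain the secular equation, and analyse $F(z)=zI_1(z)K_1(z)$ (whose strict monotonicity the paper takes from \cite{HW74}) to get uniqueness of $k$ and the monotonicity and limits in (ii). The only notable deviations are cosmetic: you justify radiality of $u_1$ via an angular Fourier decomposition whereas the paper simply invokes rotational symmetry, and you extract the limit $R\to 0^+$ from the secular equation while the paper obtains it more directly from the test-function bound $\lm_1^\omg(\cC_R)\le -2\omg/R$ using $\chi_{\cD_+}$.
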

\begin{proof}
	In view of the radial symmetry
	of the problem, the eigenfunction $u_1\in L^2(\dR^2)$
	must necessarily be radially symmetric as well. Therefore,
	in polar coordinates $(r,\tt)$ we have $u_1(r,\tt) = \psi(r)$. Using this simple observation we see that $\lm_1^\omg(\cC) = -k^2 < 0$ \iff the following ordinary differential spectral problem
	\begin{equation}\label{ODE}
	\Vast \{ 
	\begin{aligned}
	-r^{-1}\big[r \psi'(r)]' & =-k^2\psi(r)
	&&\quad\mbox{for}\quad  r\in \dR_+\sm \{R\},\\[0.3ex]
	\psi'(R^-)  = \psi'(R^+) & = \omg\big[\psi(R^-) - \psi(R^+)\big],
	\\[0.3ex]
	\lim_{r\arr \infty} \psi(r)  = \psi'(0) & = 0,
	\end{aligned}
	\end{equation}
	possesses a solution $(\psi, k)$ with
	$\psi\neq 0$, $k > 0$; \cf~\cite[Sec.~2]{AFK17}
	and~\cite[Sec. 3]{KL17a}.
	Observe that the general solution of the differential equation in~\eqref{ODE}
	with $k > 0$ is given by
	\[
	\psi(r) = 
	\begin{cases}
	A_+K_0(kr) +  B_+ I_0(kr),&\quad r < R,\\[0.3ex]
	A_-K_0(kr) +  B_- I_0(kr),&\quad r > R,
	\end{cases}
	\]
	where $A_\pm, B_\pm \in\dC$ are some coefficients and $K_0(\cdot)$, $I_0(\cdot)$ are the modified Bessel functions of zero order.
	Taking into account the boundary conditions at infinity and at the origin from~\eqref{ODE}
	and using the behaviour of $K_0(x)$ and $I_0(x)$ and of their derivatives 
	for large~\cite[9.7.1-4]{AS64}
	and small~\cite[9.6.7-9]{AS64}
	values of $x$
	we conclude that $A_+ = B_- = 0$.
	Thus, the expression for $\psi$ simplifies to
	\[
	\psi(r) = 
	\begin{cases}
	B_+ I_0(kr),&\quad r < R,\\[0.3ex]
	A_-K_0(kr) ,&\quad r > R,
	\end{cases}
	\]
	where the constants~$B_+$ and $A_-$ must not both be zero 
	to get a non-trivial solution.
	Differentiating~$\psi$ with respect to $r$, 
	we find
	\[
		\psi'(r) = 
		\begin{cases}
		kB_+ I_1(kr),&\quad r < R,\\[0.3ex]
		-kA_- K_1(kr) ,&\quad r > R.
		\end{cases}
	\]
	Thus, the boundary condition in~\eqref{ODE} 
	at the point $r = R$
	yields the requirement
	\[
	\begin{cases}
		B_+ I_1(kR) = -A_- K_1(kR),\\
		\omg\big( B_+I_0(kR) -A_-K_0(kR)\big) =B_+k I_1(kR).\\
	\end{cases}
	\]
	This linear system of equations can be simplified as 
	\begin{equation}\label{eq:system}
	\begin{cases}
		A_- K_1(kR) + B_+ I_1(kR) = 0,\\
		-A_- \omg K_0(kR) + B_+\big(\omg I_0(kR) - kI_1(kR)\big) = 0.\\
	\end{cases}
	\end{equation}
	The existence of a non-trivial solution for the system
	above is equivalent to vanishing of the underlying determinant, which gives us a scalar equation
	on $k$
	\begin{equation}\label{eq:scalar}
		\omg\left [
		K_1(kR)I_0(kR) + I_1(kR)K_0(kR)
		\right ] - kK_1(kR)I_1(kR) = 0.
	\end{equation}
	Provided $k > 0$ is a solution of~\eqref{eq:scalar},
	the vector $(A_-,B_+)^\top = (I_1(kR), -K_1(kR))^\top$
	is a solution of the system~\eqref{eq:system} and, hence, the expression~\eqref{eq:eigenfunction} for the ground-state $u_1$ immediately follows.
	
	Furthermore, using the identity $K_1(x)I_0(x) + I_1(x)K_0(x) = x^{-1}$ (see~\cite[9.6.15]{AS64}) we simplify~\eqref{eq:scalar} as
	\begin{equation}\label{eq:implicit}
		k^2R K_1(kR)I_1(kR) = \omg. 
	\end{equation}
	Consider now the $C^\infty$-smooth function
	$F(x) := x K_1(x)I_1(x)$ on $(0,\infty)$ in more detail.
	The analysis in~\cite[Prop 7.2]{HW74} implies that $F'(x) > 0$ and $\lim_{x\arr 0^+} F(x) = 0$,
	$\lim_{x\arr\infty}F(x) = \frac12$. 
	Hence, the function $G(k) := kF(kR)$ in the left-hand side of~\eqref{eq:implicit} is strictly
	increasing in $k$ and satisfies
	$\lim_{k\arr 0^+} G(k) = 0$, $\lim_{k\arr\infty}G(k) = \infty$, $G(k) < \frac{k}{2}$.  
	Therefore, the equation~\eqref{eq:implicit} possesses a
	unique positive solution $k_\star = k_\star(R) > 0$ 
	satisfying the bounds
	\begin{equation}\label{eq:k_bounds}
		2\omg < k_\star(R) < \frac{\omg}{F(2\omg R)}.
	\end{equation}
	Consequently, we get $\lim_{R\arr \infty} k_\star(R) = 2\omg$ and, hence, $\lim_{R\arr\infty}\lm_1^\omg(\cC_R) = -4\omg^2$.
	
	Using the implicit function theorem~\cite[Thm. 3.3.1]{KP} we find that $(0,\infty)\ni R\mapsto k_\star(R)$ is a $C^\infty$-smooth function,
	whose derivative satisfies
	\[
		k_\star'(R) F(k_\star(R)R) 
		+ 
		\big(k_\star(R) + Rk_\star'(R)\big)	F'(k_\star(R)R) = 0. 
	\]
	The above equation yields
	\[
		k_\star'(R) 
		= 
		-\frac{k_\star(R)F'(k_\star(R)R)}{F(k_\star(R)R) + RF'(k_\star(R)R)} < 0.
	\]
	Hence, the function $R\mapsto \lm_1^\omg(\cC_R)= -\big(k_\star(R)\big)^2$ 	is increasing.

	Finally, using the characteristic function $\chi_{\cD_+}\in H^1(\dR^2\sm\cC)$
	of the disk $\cD_+$ as a test function
	we get
	\[
		\lm_1^\omg(\cC_R) 
		\le 
		\frac{\frh_{\omg,\cC}[\chi_{\cD_+}]}
			{\|\chi_{\cD_+}\|^2_{L^2(\dR^2)}}
		=
		\frac{-2\pi R\omg}{\pi R^2}
		= 
		-\frac{2\omg}{R} \arr -\infty,
		\qquad \text{as}\,\,		R\arr 0^+. \qedhere
	\]
\end{proof}	

\section{The method of parallel coordinates}
\label{sec:parallel}
In this section we briefly recall the method of parallel coordinates. We follow the modern presentation in~\cite{S01} with an adjustment
of notation. Further details and proofs can be found in the classical papers~\cite{F41, H64}, see also the monograph~\cite{Ba80} and the references therein.

First, we introduce the distance-functions on the domains $\Omg_\pm$
as
\begin{equation*}\label{key}
	\rho_\pm\colon\Omg_\pm\arr\dR_+,\qquad
	\rho_\pm(x) := \dist(x, \Sg).
\end{equation*}
The functions $\rho_\pm$
are Lipschitz continuous with the Lipschitz constant $=1$
\begin{equation}\label{eq:rho_Lip}
	|\rho_\pm(x) - \rho_\pm(y)| \le |x-y|,\qquad \forall\, x,y \in \Omg_\pm.
\end{equation}
For the convenience of the reader we will show~\eqref{eq:rho_Lip} for $\Omg_-$. Without loss of generality we suppose that $\rho_-(x)
\ge \rho_-(y)$. Let $z\in \Sg$ be such that 
$\rho_-(y) = |y-z|$. Hence, we obtain that
$\rho_-(x) \le |x-z|$. Thus, we get
\[
	|\rho_-(x) - \rho_-(y)| \le |x-z| - |y-z|
	\le |x-y|,
\]
where the last step follows from the
triangle inequality in $\dR^2$.

Furthermore, we introduce the in-radii of $\Omg_\pm$ by
\begin{equation*}\label{eq:in_radius}
	R_\pm := \sup_{x\in \Omg_\pm} \rho_\pm(x).
\end{equation*}
The in-radius of $\Omg_+$ is thus the radius of the largest disk in $\dR^2$ that can be inscribed into $\Omg_+$, and due to the standard well-known isoperimetric inequality $|\Sg|^2 \ge 4\pi|\Omg_+|$ we get 
\begin{align}\label{isop2}
	R_+ \le R,\qquad\text{where}\quad 
	R = \frac{L}{2\pi}. 
\end{align}
On the other hand, we obviously  have
$R_- = \infty$.

Finally, we introduce the following auxiliary functions
\begin{equation}\label{eq:LmAm}
\begin{aligned}
	L_\pm &\colon [0, R_\pm] \to \dR_+,& 
	\qquad 
	L_\pm(t)&:=
	\big|	\{x\in\ov{\Omg_\pm}\colon \rho_\pm(x)= t\}\big|, \\
	A_\pm &\colon [0, R_\pm]\to [0, | \Omg_\pm| ],& 
	\qquad
	 A_\pm(t)&:=
	\big| \{x \in\Omg_\pm\colon \rho_\pm(x) < t\}\big|.
\end{aligned}
\end{equation}
Clearly, $L_\pm(0)= L$ and $A_+(R_+)= 
|\Omg_\pm|$. 
The value $A_\pm(t)$ is simply the area of the sub-domain of $\Omg_\pm$, which consists of the points located at the distance less that $t$ from its boundary $\Sg$. On the other hand, $L_\pm(t)$ is the length of the corresponding level
set of the function $\rho_\pm$.

Some analytic properties of the functions in~\eqref{eq:LmAm} are summarized in the following proposition.
\begin{prop}{\cite[App. 1, Prop.~A.1]{S01}, \cite[Chap. I, Sec. 3.6]{Ba80}}\label{prop_savo}
	Let the functions $A_\pm$ and $L_\pm$ be as in~\eqref{eq:LmAm}. Then the following hold.
	\begin{myenum}
		\item $A_\pm$ is continuous,
		locally Lipschitz, and increasing.
		%
		\item $A_\pm'(t)= L_\pm(t) > 0$
		for almost every $t\in [0,R_\pm]$.
		\item $L_+(t) \leq L - 2\pi t$
		and $L_-(t) \leq L + 2\pi t$.
	\end{myenum}
\end{prop}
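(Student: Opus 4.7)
The plan is to derive (i) and (ii) as consequences of the coarea formula applied to the $1$-Lipschitz distance function $\rho_\pm$, and to obtain (iii) from the Gauss--Bonnet theorem together with a Steiner-type analysis of parallel curves.

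For (i) and (ii), I would start from the fact, already established in~\eqref{eq:rho_Lip}, that $\rho_\pm$ is $1$-Lipschitz on $\Omg_\pm$. By Rademacher's theorem, $\rho_\pm$ is differentiable almost everywhere with $|\nabla \rho_\pm|\le 1$, and in fact $|\nabla \rho_\pm|=1$ off the cut locus of $\Sg$, which has planar Lebesgue measure zero thanks to the $C^2$-smoothness of $\Sg$. The coarea formula then yields
\[
	A_\pm(t) \;=\; \int_0^t L_\pm(s)\,ds \qquad \text{for every } t\in[0,R_\pm].
\]
From this representation, (i) is immediate: $A_\pm$ is absolutely continuous (hence locally Lipschitz), non-decreasing, and even strictly monotone because $\rho_\pm$ is continuous and surjective onto $[0,R_\pm]$. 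The Lebesgue differentiation theorem gives $A_\pm'(t)=L_\pm(t)$ for a.e.\ $t$, and the strict monotonicity of $A_\pm$ forces $L_\pm(t)>0$ almost everywhere, proving (ii).

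For (iii), the key is to control the length of the level set $\{x\in\ov{\Omg_\pm}\colon \rho_\pm(x)=t\}$ via the curvature of $\Sg$. Denote by $\kappa$ the signed curvature of $\Sg$ with respect to the inward unit normal $\nu_+$. In a tubular neighbourhood of $\Sg$, each level set $\{\rho_\pm=t\}$ is a smooth curve that can be parametrised by its foot-point on $\Sg$, with arclength element $(1-t\kappa(s))\,ds$ on the interior side and $(1+t\kappa(s))\,ds$ on the exterior side. Integrating these densities over $\Sg$ and invoking Gauss--Bonnet, $\int_\Sg \kappa\,ds=2\pi$ (which applies because $\Omg_+$ is simply connected and $C^2$-smooth), produces the two-sided Steiner bounds $L-2\pi t$ and $L+2\pi t$. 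Beyond the tubular neighbourhood, the parametrisation by foot-points is no longer injective, but the corresponding pieces of the parallel curves either self-intersect or are cut off at the cut locus, and this can only decrease the $1$-dimensional measure of the true level set; this is exactly the content of the classical Fiala--Hartman analysis of parallel curves.

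The main obstacle I would anticipate is the cut-locus step in (iii): rigorously showing that one is allowed to replace the density $(1\mp t\kappa)$ by its positive part and to disregard the folded pieces that lie past the cut locus. Once this geometric fact is granted, (iii) follows by integration over $\Sg$; the rest of the proposition is routine measure-theoretic bookkeeping.
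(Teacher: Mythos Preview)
The paper does not prove this proposition at all: it is stated with attribution to \cite[App.~1, Prop.~A.1]{S01} and \cite[Chap.~I, Sec.~3.6]{Ba80} and then used as a black box. So there is no ``paper's own proof'' to compare against; your sketch is essentially a reconstruction of the argument one finds in those references. The coarea-formula derivation of (i) and (ii) is standard and correct, and your outline of (iii) via the Steiner/Fiala--Hartman bound together with $\int_\Sg \kappa\,ds = 2\pi$ is precisely the classical route; you have also correctly identified the only genuinely delicate point, namely the behaviour past the cut locus, which is where the references \cite{F41,H64} do the real work.
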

Further, let $\psi_+ \in C^\infty([0,R])$
and $\psi_- \in C^\infty_0([0,\infty))$ be 
arbitrary and real-valued. Due to the properties of $A_\pm$ stated in Proposition~\ref{prop_savo}\,(i), there exist Lipschitz continuous 
functions $\phi_+\colon [0, |\Omg_+|]\arr \dR$
and $\phi_-\colon [0, \infty)\arr \dR$ satisfying
\begin{align}\label{parallel1}
	\psi_+\big|_{[0, R_+]} = \phi_+ \circ A_+
	\and
	\psi_- = \phi_- \circ A_-.
\end{align}
Consider now the test function 
\[
	u = 
	(\phi_+ \circ A_+ \circ \rho_+)\oplus
	(\phi_- \circ A_- \circ \rho_-).
\]
Lipschitz continuity of $\phi_\pm$,
Proposition~\ref{prop_savo}\,(i) and~\eqref{eq:rho_Lip}
imply that  $u \in H^1(\dR^2\sm\Sg)$. Employing the parallel coordinates together with the co-area formula (see~\cite[Eq. 30]{S01}
for more details) and applying further~\eqref{isop2},~\eqref{parallel1} we get
\begin{equation}\label{parallel2}
\begin{aligned}
	&\|\nb_{\dR^2\sm\Sg} u \|^2_{L^2(\dR^2;\dC^2)}
	= 
	\|\nb u_+ \|^2_{L^2(\Omg_+;\dC^2)} 
	+ 
	\|\nb u_- \|^2_{L^2(\Omg_-;\dC^2)}\\
	&\qquad 
	=
	\int_0^{R_+} | \phi_+'(A_+(t))|^2 
	(A_+'(t))^3 \dd t
	+
	\int_0^\infty | \phi_-'(A_-(t))|^2 
	(A_-'(t))^3 \dd t
	\\
	&\qquad =
	\int_0^{R_+} | \psi_+'(t)|^2 
	A_+'(t) \dd t
	+
	\int_0^\infty | \psi_-'(t)|^2 
	A_-'(t) \dd t\\
	&\qquad \le
	\int_0^{R} | \psi_+'(t)|^2 
	(L-2\pi t) \dd t
	+
	\int_0^\infty | \psi_-'(t)|^2 
	(L+2\pi t) \dd t,
\end{aligned}
\end{equation}
where Proposition~\ref{prop_savo}\,(ii),~(iii) was used in the last step.
Following the same steps (\cf~\cite[App. 1]{S01}) we also get
\begin{equation}\label{parallel3}
\begin{aligned}
	\| u\|^2_{L^2(\dR^2)} 
	& =
	\int_0^{R_+} | \phi_+(A_+(t))|^2 
	A_+'(t) \dd t
	+
	\int_0^\infty | \phi_-(A_-(t))|^2 
	A_-'(t) \dd t
	\\
	& \le
	\int_0^{R} | \psi_+(t)|^2 
	(L-2\pi t)\dd t
	+
	\int_0^\infty | \psi_-(t)|^2 
	(L + 2\pi t) \dd t.
\end{aligned}
\end{equation}
Let us focus on the jump of the trace of $u$
onto $\Sg$. It is easy to see that for any 
$x\in\Sg$ we have $[u]_\Sg(x) = \psi_+(0) - \psi_-(0)$. Hence, we obtain
\begin{align}\label{parallel4}
	\big\| [u]_\Sg\big\|^2_{L^2(\Sg)} =
	L |\psi_+(0) - \psi_-(0)|^2. 
\end{align}

\section{Proof of Theorem~\ref{Thm}}
\label{sec:proof}

We are now able to conclude the proof of Theorem~\ref{Thm}. The argument will be split into two steps.

\smallskip

\noindent {\em Step 1.}
On this step, we make several preliminary constructions.
First, we define the sub-space of $H^1(\dR^2\sm\cC)$ as
\begin{equation*}\label{eq:subspace}
	\cL := \big\{
	w = w_+\oplus w_- \in C^\infty(\ov{\cD_+})\oplus C^\infty_0(\ov{\cD_-})\colon \p_\tt w = 0\big \}.  
\end{equation*}
Notice that for any $w \in\cL$ there exist functions $\psi_+\in C^\infty([0,R])$ and $\psi_-\in C^\infty_0([0,\infty))$ satisfying 
$w_+(r,\tt) =  \psi_+(R - r)$ and
$w_-(r,\tt) =  \psi_-(r - R)$. 
Next, we point out that
the ground-state $u_1\in H^1(\dR^2\sm\cC)$ of $\sfH_{\omg,\cC}$
given in~\eqref{eq:eigenfunction} belongs
to the closure of $\cL$ in the norm of $H^1(\dR^2\sm\cC)$; \ie there exists a sequence
$(w_n)_n \in \cL$ such that
\begin{equation}\label{eq:density}
	\|w_n - u_1\|_{H^1(\dR^2\sm\cC)}
	\arr 0,\qquad n\arr\infty.
\end{equation}
Finally, we define the linear mapping 
$\sfV \colon \cL\arr H^1(\dR^2\sm\Sg)$ by
\begin{equation*}\label{key}
	(\sfV w)(x) := 
	\begin{cases} 
		\psi_+(\rho_+(x)),&\quad x\in\Omg_+,\\
		\psi_-(\rho_-(x)),&\quad x\in\Omg_-.\\
	\end{cases}	
\end{equation*} 

\noindent {\em Step 2.}
Using the inequalities~\eqref{parallel2},~\eqref{parallel3}
and the identity~\eqref{parallel4}, we obtain from the min-max principle~\eqref{eq:minmax2} that
\begin{equation*}\label{cl1}
\begin{aligned}
	\Ev 
	& \le 
	\inf_{w\in \cL\colon \frh_{\omg,\cC}[w] <0} \frac{\frm[\sfV w]}{\|\sfV w\|^2_{L^2(\dR^2)}}\\
	& \le
	\inf_{w\in \cL\colon \frh_{\omg,\cC}[w] <0} \frac{\frh_{\omg,\cC}[w]}{\|w\|^2_{L^2(\dR^2)}}	
	=
	\frac{\frh_{\omg,\cC}[u_1]}{\|u_1\|^2_{L^2(\dR^2)}}
	=
	\lm_1^\omg(\cC),
\end{aligned}
\end{equation*}
where the property~\eqref{eq:density} was used
in the last but one step.

\section{Discussion}\label{sec:discussion}

The same technique can be used to reprove the optimization result in~\cite{EHL06} 
on $\dl$-interactions without making use of the Birman-Schwinger principle. In fact, the method 
seems to be applicable for a
larger sub-class of general four-parametric boundary conditions,
considered in~\cite{ER16}. One has only to ensure that the
lowest spectral point is indeed a negative eigenvalue and that the corresponding ground-state is real-valued and radially symmetric for the case of the interaction supported on a circle.

For the moment, it is unclear how to prove a counterpart of Theorem~\ref{Thm} and whether it is true or not under the constraint of a fixed area. In contrast to the case of the Robin Laplacian on an exterior domain~\cite{KL17a, KL17b}, this result
does not follow from the corresponding inequality
under the constraint of a fixed perimeter, because the lowest
eigenvalue for the $\dl'$-interaction supported
on a circle is not a decreasing, but an increasing function of its radius; see Proposition~\ref{prop:circle}. The same 
problem arises for the Robin Laplacian on a bounded
domain with a negative boundary parameter~\cite{AFK17,FK15}.

\section*{Acknowledgement}
The author is indebted to Pavel Exner, Michal Jex, 
David Krej\v{c}i\v{r}\'{i}k, and Magda Khalile for fruitful discussions and gratefully acknowledges the support by the grant No.~17-01706S of the Czech Science Foundation (GA\v{C}R). 

The author also thanks Alessandro Michelangeli for the invitation to participate in and give a mini-course at the $3^{\rm rd}$ workshop: 
{\it Mathematical Challenges of Zero-Range Physics:
rigorous results and open problems} and 
Istituto Nazionale di Alta Matematica ``Francesco Severi'' for the financial support of the travel.

\newcommand{\etalchar}[1]{$^{#1}$}

\end{document}